\newtheorem{theorem}{Theorem}[section]
\newtheorem*{theorem*}{Theorem B} 
\newtheorem{lemma}[theorem]{Lemma}
\newtheorem{proposition}[theorem]{Proposition}
\newtheorem*{definition*}{Definition}
\newtheorem*{remark*}{Remark}
\newtheorem*{observation*}{Observation}
\newtheorem*{assumption*}{Assumption}
\newtheorem*{question*}{Question}
\newtheorem{remark}[theorem]{Remark}
\newcommand{\N}{\mathbb{N}}
\newcommand{\C}{\mathbb{C}}
\newcommand{\E}{\mathbb{E}}
\newcommand{\PP}{\mathbb{P}}
\newcommand{\supp}{\mathrm{supp}}
\newcommand{\an}{\text{\, and \,}}
\begin{document}

\title[Superposition rigidity involving DPP]{A rigidity property of superpositions involving determinantal processes}

\author
{Yanqi Qiu}
\address
{Yanqi QIU: Institute of Mathematics and Hua Loo-Keng Key Laboratory of Mathematics, AMSS, Chinese Academy of Sciences, Beijing 100190, China; CNRS, Institut de Math\'ematiques de Toulouse, Universit\'e Paul Sabatier
}
\email{yanqi.qiu@amss.ac.cn}

\begin{abstract}
The main result of this paper states that if $(N, \Pi)$ is a pair of independent point processes on a common ground space with $N$ Poisson and $\Pi$ determinantal induced by a locally trace class (not necessarily self-adjoint) correlation kernel, then their independent superposition $N + \Pi$ determines uniquely the distributions of $N$ and $\Pi$.
\end{abstract}

\subjclass[2010]{Primary 60G55; Secondary  30D20}
\keywords{independent superposition, Poisson point process,  determinantal point process}

\maketitle

\setcounter{equation}{0}

\section{Introduction}
Let $E$ be an arbitrary locally compact Polish space, equipped with a non-negative Radon measure $\mu$. Let $\N$ be the set of non-negative integers. Let $\Gamma = \Gamma(E)$ be the set of Radon measures on $E$ with values in $\N \cup \{\infty\}$, equipped with the smallest sigma-algebra $\mathcal{B}_\Gamma$ making the following mappings measurable: 
\[
\xi  \in \Gamma \mapsto \xi (B) \in \N \cap \{\infty\},
\]
where $B$ ranges over all Borel subsets of $E$. By a {\it point process} $\Lambda$ on $E$, we always mean a $\Gamma(E)$-valued random variable. Thus a point process is a random Radon counting measure. For further background on the general theory of point processes, see  Daley and Vere-Jones \cite{DV-1}, Kallenberg \cite{Kallenberg}.

The {\it independent superposition} of two {\it independent} point processes $\Lambda_1$ and $\Lambda_2$ on $E$ is defined as their sum
\[
\Lambda_1 + \Lambda_2.
\] 
The above sum is understood as the sum of two random counting measures on $E$.

Our main result is the following ridigity property of superpositions involving determinantal processes with locally trace class ({\it not necessarily self-adjoint}) kernels. The  definitions of Poisson and determinantal processes will be recalled in \S \ref{sec-def} and \S \ref{sec-def-dpp}.

\begin{theorem}\label{thm-main}
Let $(N, \Pi)$ be a pair of independent point processes on $E$ with $N$ Poisson and  $\Pi$ determinantal induced by a locally trace class (not necessarily self-adjoint) correlation kernel. Let $(N', \Pi')$ be another such independent pair. If  the independent superpositions of the two pairs have the same distribution:
\[
N + \Pi \stackrel{d}{=} N' +  \Pi',
\]  
then 
\[
N \stackrel{d}{=} N' \text{\, and \,} \Pi \stackrel{d}{=} \Pi'.
\]
\end{theorem}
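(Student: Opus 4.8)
The plan is to work with Laplace functionals and to exploit their multiplicativity under independent superposition together with the Fredholm determinant formula for the Laplace functional of a determinantal process. Write $\lambda,\lambda'$ for the intensity measures of $N,N'$ and $K,K'$ for the correlation kernels of $\Pi,\Pi'$. For bounded compactly supported $f\colon E\to[0,\infty)$, set $\varphi = 1-e^{-f}$, so $0\le \varphi \le 1-e^{-\|f\|_\infty}<1$ and $\varphi$ is supported in a fixed compact set. Then $\E[e^{-\int f\,dN}] = \exp(-\int \varphi\,d\lambda)$, while, using that $\Pi$ puts a.s.\ finitely many points on $\supp\varphi$ and expanding $\prod_{x\in\Pi}(1-\varphi(x))$,
\[
\E\Big[e^{-\int f\,d\Pi}\Big] \;=\; \sum_{k\ge 0}\frac{(-1)^k}{k!}\int_{E^k}\varphi^{\otimes k}(x)\,\det\!\big[K(x_i,x_j)\big]_{i,j=1}^{k}\,d\mu^{\otimes k}(x) \;=\; \det(I-\varphi K),
\]
the Fredholm determinant of the trace class operator $\varphi K$ on $L^2(E,\mu)$ (multiplication by $\varphi$ followed by $K$; trace class since $K$ is locally trace class and $\varphi$ has compact support). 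This last identity for non-self-adjoint locally trace class kernels is the content recalled in \S\ref{sec-def-dpp}. By independence and multiplicativity of Laplace functionals, the hypothesis $N+\Pi\stackrel{d}{=}N'+\Pi'$ yields, for every such $\varphi$,
\[
\exp\Big(-\!\int \varphi\,d\lambda\Big)\,\det(I-\varphi K) \;=\; \exp\Big(-\!\int \varphi\,d\lambda'\Big)\,\det(I-\varphi K').
\]

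Next I would disentangle the Poisson and determinantal contributions by introducing a scaling parameter. Fix $\varphi$ as above; for $t\in[0,1]$ the function $t\varphi$ is again of the form $1-e^{-f_t}$ with $f_t\ge 0$ bounded and compactly supported, so the displayed identity holds with $\varphi$ replaced by $t\varphi$. Both sides are entire functions of $t$ (a Fredholm determinant of a trace class operator depending linearly on $t$ is entire, as is the exponential), so
\[
e^{-t\int\varphi\,d\lambda}\,\det(I-t\varphi K) \;=\; e^{-t\int\varphi\,d\lambda'}\,\det(I-t\varphi K') \qquad (t\in\C).
\]
For a trace class operator $A$ one has $\det(I-tA)=\prod_j(1-t\lambda_j(A))$, a genus-$0$ canonical product over the eigenvalues of $A$ with algebraic multiplicity, while $e^{-t\int\varphi\,d\lambda}$ is zero-free; hence the two sides are Hadamard factorizations of one entire function of order $\le 1$. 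By uniqueness of the Hadamard factorization, $\varphi K$ and $\varphi K'$ have the same nonzero eigenvalues with the same multiplicities, so $\det(I-t\varphi K)=\det(I-t\varphi K')$ for all $t$, and dividing it out gives $\int\varphi\,d\lambda=\int\varphi\,d\lambda'$. Both conclusions hold for every bounded compactly supported $\varphi\ge 0$ (the constraint $\varphi<1$ is removed by rescaling). In particular $\lambda=\lambda'$, and since a Poisson process is determined by its intensity measure, $N\stackrel{d}{=}N'$.

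For the determinantal part, expand $\det(I-t\varphi K)=\det(I-t\varphi K')$ in powers of $t$ and compare coefficients: $\int_{E^k}\varphi^{\otimes k}\rho_k\,d\mu^{\otimes k}=\int_{E^k}\varphi^{\otimes k}\rho_k'\,d\mu^{\otimes k}$ for all $k\ge 1$, where $\rho_k(x)=\det[K(x_i,x_j)]$ and $\rho_k'(x)=\det[K'(x_i,x_j)]$ are the $k$-point correlation functions of $\Pi,\Pi'$. Replacing $\varphi$ by $s_1\psi_1+\cdots+s_k\psi_k$ with $\psi_l\ge 0$ bounded compactly supported and $s_l\ge 0$, and extracting the coefficient of $s_1\cdots s_k$, the symmetry of the correlation functions gives $\int_{E^k}\psi_1\otimes\cdots\otimes\psi_k\,(\rho_k-\rho_k')\,d\mu^{\otimes k}=0$; letting the $\psi_l$ run over indicators of bounded Borel sets forces $\rho_k=\rho_k'$ $\mu^{\otimes k}$-a.e. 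Finally, a determinantal process with locally trace class kernel is determined by its correlation functions (its factorial moments grow slowly enough for Lenard's uniqueness criterion; see \S\ref{sec-def-dpp}), so $\Pi\stackrel{d}{=}\Pi'$.

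The main obstacle is the middle step: separating the product of the Poisson and determinantal contributions. The device of introducing the scaling parameter $t$ — turning the single functional identity into a one-parameter family of identities between entire functions of finite order — is what makes the separation clean: the Poisson factor is exactly the zero-free part of the Hadamard factorization and the determinantal factor is the canonical product, so uniqueness of the factorization splits the identity in one stroke. Two technical points require care along the way, both to be handled in the preliminary sections: the validity of the Fredholm determinant formula for the Laplace functional when $K$ is merely locally trace class and not self-adjoint (so one cannot pass to $\sqrt{\varphi}\,K\sqrt{\varphi}$ and invoke the spectral theorem), and the fact that such a determinantal process is still uniquely determined by its correlation functions.
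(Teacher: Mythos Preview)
Your argument is correct and follows essentially the same route as the paper: introduce a scaling parameter to obtain, for each test function, an identity between entire functions; use that the Poisson contribution is a zero-free exponential while the determinantal contribution is a Fredholm determinant $\prod_j(1-t\lambda_j)$ determined by its zeros; conclude that the two factors match separately. The paper phrases this via probability generating functionals $\mathscr{B}_\Lambda(z\varphi)$ rather than Laplace functionals, which avoids the substitution $\varphi=1-e^{-f}$ and the restriction $0\le\varphi<1$, but the mechanism is identical.

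One simplification: your final paragraph is unnecessary. Once you have $\det(I-\varphi K)=\det(I-\varphi K')$ for every bounded compactly supported $\varphi\ge 0$ with $\varphi<1$, you already have equality of all Laplace functionals $\E[e^{-\int f\,d\Pi}]=\E[e^{-\int f\,d\Pi'}]$ (take $\varphi=1-e^{-f}$), and Laplace functionals determine the law of a point process. There is no need to extract correlation functions or invoke Lenard's criterion; this also removes one of the two technical worries you flag at the end.
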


\begin{remark}
In statistic mechanics, determinantal processes were originally developed in the study of the statistic mechanics of fermion particle systems. While Poisson point processes are usually used to model many random processes in nature such as locations of stars in the sky, defects in materials, chemical reactions etc. It would be interesting  to have a physical interpretation of the superposition rigidity stated in Theorem \ref{thm-main}. 
\end{remark}

A more general result, Theorem \ref{thm-gen}, is given after the proof of Theorem \ref{thm-main}.

\subsection{Outline  of the proof of Theorem \ref{thm-main}}
We will use the {\it probability generating functionals} of  Poisson and determinantal point processes. More precisely, the probability generating functionals of two independent point processes $N$ and $\Pi$ ($N$ being Poisson and $\Pi$ being determinantal), give rise to two families of {\it entire functions} on $\C$: for any bounded compactly supported function $\varphi: E\rightarrow\C$, define two entire functions by
\[
z \mapsto \mathscr{B}_N (z \varphi), \quad z \mapsto \mathscr{B}_\Pi(z \varphi), \quad z \in \C.
\]
See \eqref{exp-lin} and \eqref{pgf-dpp} for the definitions of $\mathscr{B}_N$ and $\mathscr{B}_\Pi$. 
 It turns out that for any fixed $\varphi$, the entire function $z \mapsto \mathscr{B}_N(z \varphi)$ never vanishes (Lemma \ref{lem-pgf-ppp}), while the entire function $z \mapsto \mathscr{B}_\Pi(z \varphi)$ is {\it uniquely determined by its zeros} (see Lemma \ref{lem-pgf-dpp} for precise statement).
Note that the probability generating functional of the independent superposition $N + \Pi$ is given by the product: 
\[
\mathscr{B}_{N + \Pi} (z \varphi)  =  \mathscr{B}_{N}(z \varphi) \cdot \mathscr{B}_\Pi(z \varphi). 
\] 
Now the following elementary observation is crucial for us.
\begin{itemize}
\item {\it It is possible to reconstruct two unknown entire functions from their product, provided that one of the two functions is uniquely determined by its zeros and  the other is non-vanishing.} 
\end{itemize} 
Using the above observation, we see that the probability generating functional of the independent superposition $N +  \Pi$ determines uniquely the probability generating functionals of $N$ and $\Pi$ and hence determines uniquely the distributions of $N$ and $\Pi$.

\subsection{Poisson point processes}\label{sec-def}

A point process $N$  on $E$ is called a {\it Poisson point process} with intensity measure $\nu$ (which is a Radon measure on $E$) if it satisfies the following properties: 
\begin{itemize}
\item For any finitely many disjoint relatively compact Borel subsets $B_1, \cdots, B_k \subset E$, the random variables $N(B_1), \cdots, N(B_k)$ are independent. 
\item For any relatively compact Borel subset $B \subset E$, the random variable $N(B)$ follows the Poisson distribution: 
\[
\PP (N(B) = n)  = e^{-\nu(B) } \cdot \frac{\nu(B)^n}{n!}, \text{ for all $n \in \N$}.
\]
\end{itemize}

 The {\it Bogoliubov functional} or the {\it probability generating functional} for the above Poisson point process $N$ is defined by
\begin{align}\label{exp-lin}
 \mathscr{B}_{N}(\varphi) : = \E \Big(\prod_{x \in E} (1 + \varphi(x))^{N(\{x\})}\Big),  \text{\, for all $\varphi \in \mathcal{B}_c (E)$}.
\end{align}
Here and after, $\mathcal{B}_c(E)$ denotes the set of bounded Borel functions $\varphi: E\rightarrow \C$ with compact support.

The following lemma is well-known, we refer to Kingman \cite[Section 3.2]{Kingman-PPP} for its proof. 

\begin{lemma}\label{lem-ppp-bog}
 Let $N$ be a Poisson point process on $E$ with intensity measure $\nu$. Then  
\begin{align}\label{ppp-bog}
\mathscr{B}_{N}(\varphi)  = \exp\Big(\int_E \varphi d\nu\Big),  \text{\, for all $\varphi \in \mathcal{B}_c (E)$}.
\end{align}
\end{lemma}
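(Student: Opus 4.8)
The plan is to prove the identity first for simple functions, where the defining product collapses and the independence of the Poisson process can be exploited directly, and then to extend to all of $\mathcal{B}_c(E)$ by uniform approximation and dominated convergence.

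First I would observe that the product in \eqref{exp-lin} is in fact a finite product almost surely. Writing $K := \supp \varphi$, the function $\varphi$ vanishes off $K$, so every factor with $x \notin K$ equals $1$, while the factors with $x \in K$ correspond to the atoms of $N$ inside $K$. Since $\nu$ is Radon we have $\nu(K) < \infty$, so $N(K)$ is Poisson-distributed with finite parameter and hence finite almost surely; thus $\prod_{x \in E}(1+\varphi(x))^{N(\{x\})}$ is a well-defined random product. Next, for a simple function $\varphi = \sum_{j=1}^k c_j \ch_{B_j}$ with $B_1,\dots,B_k$ pairwise disjoint relatively compact Borel sets and $c_j \in \C$, the product factors (grouping atoms by the set $B_j$ containing them) as $\prod_{j=1}^k (1+c_j)^{N(B_j)}$. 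Applying the independence of $N(B_1),\dots,N(B_k)$ together with the elementary scalar Poisson generating function $\E(s^{M}) = e^{\lambda(s-1)}$, valid for every $s \in \C$ when $M$ is Poisson with parameter $\lambda$ since the defining series converges absolutely, with $s = 1+c_j$ and $\lambda = \nu(B_j)$, gives
\[
\mathscr{B}_N(\varphi) = \prod_{j=1}^k \E\big((1+c_j)^{N(B_j)}\big) = \prod_{j=1}^k e^{c_j \nu(B_j)} = \exp\Big(\sum_{j=1}^k c_j\, \nu(B_j)\Big) = \exp\Big(\int_E \varphi\, d\nu\Big).
\]
This settles the lemma for simple functions.

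For general $\varphi \in \mathcal{B}_c(E)$, I would choose simple functions $\varphi_n$, supported in $K$ and satisfying $\|\varphi_n\|_\infty \le \|\varphi\|_\infty =: M$, converging to $\varphi$ uniformly. The right-hand side of \eqref{ppp-bog} then converges because $\int_E \varphi_n\, d\nu \to \int_E \varphi\, d\nu$, using $\nu(K) < \infty$. For the left-hand side I would pass to the limit inside the expectation by dominated convergence: on each realization the finite products $\prod_{x \in K}(1+\varphi_n(x))^{N(\{x\})}$ converge to $\prod_{x\in K}(1+\varphi(x))^{N(\{x\})}$, and they are dominated uniformly in $n$ by $(1+M)^{N(K)}$. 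Combining the two limits yields \eqref{ppp-bog} for every $\varphi \in \mathcal{B}_c(E)$.

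The main obstacle, or rather the only genuinely technical point, is this last dominated-convergence step: one must exhibit an integrable dominating variable for the random products, which is exactly where the finiteness $\nu(K) < \infty$ and the exponential moments of the Poisson law enter, since $\E\big((1+M)^{N(K)}\big) = e^{M \nu(K)} < \infty$ precisely because $N(K)$ is Poisson with finite parameter. Everything else reduces to the scalar Poisson generating function and the independence of $N$ over disjoint sets, and is routine.
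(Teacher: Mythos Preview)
Your argument is correct: the reduction to simple functions via the independence property and the scalar Poisson generating function, followed by uniform approximation and dominated convergence with the integrable envelope $(1+M)^{N(K)}$, is the standard route and all steps are justified.

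The paper itself does not give a proof of this lemma; it simply cites Kingman \cite[Section 3.2]{Kingman-PPP}. What you have written is essentially the argument one finds there (Campbell's theorem and its extensions), so there is no substantive difference in approach---you have just filled in what the paper chose to outsource to a reference.
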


\subsection{Determinantal point processes}\label{sec-def-dpp}

 In what follows, if $g_1, g_2: E \rightarrow \C$ are bounded Borel functions and $K: L^2(E, \mu)\rightarrow L^2(E, \mu)$ is a bounded linear operator, then we define
\[
g_1 K g_2: = M_{g_1} \circ K \circ M_{g_2},
\]
where $M_g$ is the operator defined by $M_g(f)= gf$ for all $f \in L^2(E, \mu)$.

An operator $K: L^2(E, \mu)\rightarrow L^2(E, \mu)$ is said to be locally trace class if 
\[
\mathds{1}_A K \mathds{1}_B
\]
is in the ideal of trace class operators on $L^2(E, \mu)$ for any relatively compact Borel subsets $A, B \subset E$ (the reader is referred to Simon's book \cite{Simon-trace} for more details on trace class operators).

A point process $\Pi$ on $E$ is said to be determinantal if there exists a locally trace class operator $K: L^2(E, \mu) \rightarrow L^2(E, \mu)$ such that the probability generating functional of $\Pi$ can be expressed by
\begin{align}\label{pgf-dpp}
\mathscr{B}_{\Pi}(\varphi)  = \E \Big(\prod_{x \in E} (1 + \varphi(x))^{\Pi(\{x\})}\Big) = \det ( 1 + \varphi K \mathds{1}_{\supp_e(\varphi)}), \text{ for all $\varphi \in \mathcal{B}_c (E)$,}
\end{align}
where $\supp_e(\varphi)$ is the {\it essential support} of the function $\varphi$, i.e., $\supp_e(\varphi)$ is the smallest closed subset of $E$ such that $\varphi$ vanishes $\mu$-almost everywhere on the complementary set $(\supp_e(\varphi))^c$. Here the Fredholm determinant $\det ( 1 + \varphi K \mathds{1}_{\supp_e(\varphi)})$ is well-defined since the locally trace class implies that $\varphi K \mathds{1}_{\supp_e(\varphi)}$ is trace class. The reader is referred to Simon \cite{Simon-det} for more details on Fredholm determinants and to \cite[Theorem 2]{DPP-S}, \cite[Theorem 1.2]{ST-palm} or \cite{Buf-mul} and  \cite[formula (16)]{QB-cmp} for more details on the formula \eqref{pgf-dpp}.

\begin{remark}
Our definition of determinantal processes uses only the correlation kernels $K$ as  {\bf bounded linear operators} on $L^2(E, \mu)$ which are locally trace class. However, it is worthwhile to mention that, if one needs to use the correlation functions of determinantal processes, then a special choices of the kernels $K$ as a functions on $E\times E$ will be needed.
\end{remark}

By the Macchi-Soshnikov theorem \cite{DPP-M}, \cite{DPP-S},  any {\it positive self-adjoint contractive operator} $K: L^2(E, \mu) \rightarrow L^2(E,\mu)$ that is locally trace class gives a determinantal process. See also  Shirai and Takahashi  \cite{ST-palm, ST-DPP}. There exist also  determinantal point processes with non-selfadjoint correlation kernels.

Although there exist determinantal processes with {\it non locally trace class} correlation kernels (see Borodin-Okounkov-Olshanski \cite{BOO-jams} and  Lytvynov \cite{Lytvynov-J}), it seems that our method in this paper works only for locally trace class (not necessarily self-adjoint) correlation kernels.

\section{Proof}

We first recall two elementary facts:  
\begin{itemize}
\item For both the Poisson point process $N$ and the determinantal point process $\Pi$ on $E$, the probabillity generating functionals $\varphi \in \mathcal{B}_c(E) \mapsto \mathscr{B}_N(\varphi)$ and $\varphi \in \mathcal{B}_c(E) \mapsto\mathscr{B}_\Pi(\varphi)$ determine uniquely the the distributions of $N$ and $\Pi$. This statement holds for any point processes whose probability generating functionals are well-defined.
\item The probability generating functional for the independent superposition $N + \Pi$ is
\begin{align}\label{pgf-super}
\mathscr{B}_{N + \Pi} (\varphi) = \mathscr{B}_N (\varphi) \cdot \mathscr{B}_\Pi (\varphi), \text{\, for all $\varphi \in \mathcal{B}_c(E)$.}
\end{align}
\end{itemize}
Using the above two elementary facts, we see that Theorem \ref{thm-main} follows immediately from the following proposition.

\begin{proposition}\label{prop-inj}
Let $N, \Pi$ be two independent point processes on $E$ with $N$ Poisson and $\Pi$  eterminantal induced by locally trace class correlation kernel. Then the product functional $\varphi \in \mathcal{B}_c(E)\mapsto \mathscr{B}_N (\varphi) \cdot \mathscr{B}_\Pi (\varphi)$ determines uniquely the two functionals $\varphi \in \mathcal{B}_c(E)\mapsto \mathscr{B}_N (\varphi)$ and $\varphi  \in \mathcal{B}_c(E)\mapsto \mathscr{B}_\Pi (\varphi)$.
\end{proposition}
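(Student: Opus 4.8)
The plan is to fix an arbitrary test function $\varphi \in \mathcal{B}_c(E)$ and pass to the \emph{one-parameter families of entire functions} $z \mapsto \mathscr{B}_N(z\varphi)$ and $z \mapsto \mathscr{B}_\Pi(z\varphi)$ on $\C$. By the factorization \eqref{pgf-super}, their product is $z \mapsto \mathscr{B}_{N+\Pi}(z\varphi) = \mathscr{B}_N(z\varphi)\cdot\mathscr{B}_\Pi(z\varphi)$, which is given data. It then suffices to show that from this product of two entire functions one can recover each factor; evaluating the recovered factors at $z=1$ returns $\mathscr{B}_N(\varphi)$ and $\mathscr{B}_\Pi(\varphi)$, and, letting $\varphi$ vary over $\mathcal{B}_c(E)$, recovers the two functionals $\varphi \mapsto \mathscr{B}_N(\varphi)$ and $\varphi \mapsto \mathscr{B}_\Pi(\varphi)$; these in turn determine the laws of $N$ and $\Pi$ by the first elementary fact recalled above. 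So the proposition reduces to a statement about pairs of entire functions.

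First I would record the structure of the two factors. For the Poisson factor, Lemma \ref{lem-ppp-bog} gives $\mathscr{B}_N(z\varphi) = \exp\bigl(z\int_E \varphi\,d\nu\bigr)$, so $z\mapsto\mathscr{B}_N(z\varphi)$ is an entire function of the form $e^{cz}$ with $c = \int_E\varphi\,d\nu\in\C$; in particular it has \emph{no zeros} (this is Lemma \ref{lem-pgf-ppp}). For the determinantal factor, set $T_\varphi := \varphi K\mathds{1}_{\supp_e(\varphi)}$; since $\supp_e(\varphi)$ is a closed subset of the compact support of $\varphi$ it is relatively compact, so $\mathds{1}_{\supp_e(\varphi)}K\mathds{1}_{\supp_e(\varphi)}$ is trace class by the local trace class hypothesis, whence $T_\varphi = M_\varphi\cdot\mathds{1}_{\supp_e(\varphi)}K\mathds{1}_{\supp_e(\varphi)}$ is trace class. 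Because $\supp_e(z\varphi) = \supp_e(\varphi)$ for $z\neq 0$, formula \eqref{pgf-dpp} gives $\mathscr{B}_\Pi(z\varphi) = \det(1+zT_\varphi)$ for all $z\in\C$ (the value at $z=0$ being $1$, consistently). By continuity of the Fredholm determinant on the trace ideal and $\|zT_\varphi\|_1 = |z|\,\|T_\varphi\|_1$, the map $z\mapsto\det(1+zT_\varphi)$ is entire of order at most $1$, equal to $1$ at the origin; and by the classical identity $\det(1+zT_\varphi) = \prod_j(1+z\lambda_j)$, where $\lambda_1,\lambda_2,\dots$ are the nonzero eigenvalues of $T_\varphi$ listed with algebraic multiplicity (so $\sum_j|\lambda_j|<\infty$), this entire function equals the absolutely convergent genus-zero canonical product over its zero set $\{-\lambda_j^{-1}\}_j$; hence $z\mapsto\mathscr{B}_\Pi(z\varphi)$ is \emph{uniquely determined by the multiset of its zeros} (this is Lemma \ref{lem-pgf-dpp}).

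The proof is then completed by the elementary observation of the outline. Write $h(z) := \mathscr{B}_{N+\Pi}(z\varphi) = e^{cz}\det(1+zT_\varphi)$. Since the Poisson factor $e^{cz}$ is zero-free, the multiset of zeros of $h$ coincides with that of $z\mapsto\mathscr{B}_\Pi(z\varphi)$; by the previous paragraph this multiset determines the function $z\mapsto\mathscr{B}_\Pi(z\varphi)$, and then $z\mapsto\mathscr{B}_N(z\varphi) = h(z)/\mathscr{B}_\Pi(z\varphi)$ is recovered as well. Equivalently, if $(N',\Pi')$ is another admissible pair with $\mathscr{B}_{N+\Pi}(z\varphi) = \mathscr{B}_{N'+\Pi'}(z\varphi)$ for all $z$, then $e^{cz}\prod_j(1+z\lambda_j) = e^{c'z}\prod_k(1+z\lambda_k')$; comparing zero multisets forces $\{\lambda_j\}_j = \{\lambda_k'\}_k$, hence $\mathscr{B}_\Pi(z\varphi) = \mathscr{B}_{\Pi'}(z\varphi)$, and then $e^{cz} = e^{c'z}$ forces $c = c'$, i.e. $\mathscr{B}_N(z\varphi) = \mathscr{B}_{N'}(z\varphi)$. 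Taking $z=1$ and ranging over $\varphi\in\mathcal{B}_c(E)$ gives the conclusion.

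The step I expect to be the main obstacle — and the one to which Lemma \ref{lem-pgf-dpp} is devoted — is the assertion that the Fredholm determinant $z\mapsto\det(1+zT_\varphi)$ carries \emph{no extra exponential factor}: a priori Hadamard factorization only gives $e^{bz}$ times the genus-zero canonical product over the zeros, and the linear term $e^{bz}$ is not detectable from the zero set alone. Ruling it out relies on the classical fact that the Fredholm determinant of a trace class operator equals the product of its eigenvalues counted with algebraic multiplicity, together with the absolute convergence $\sum_j|\lambda_j|<\infty$ afforded by the trace class hypothesis — which is precisely why the argument is restricted to locally trace class (not necessarily self-adjoint) kernels. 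The remaining points (that $T_\varphi$ is trace class, that $\supp_e(z\varphi)$ is $z$-independent for $z\neq0$ so that $z\mapsto\mathscr{B}_\Pi(z\varphi)$ is genuinely the entire function $z\mapsto\det(1+zT_\varphi)$, and the analyticity and order bound for this determinant) are routine.
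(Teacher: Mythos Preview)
Your proof is correct and follows essentially the same approach as the paper: fix $\varphi$, pass to the one-parameter entire functions $z\mapsto\mathscr{B}_N(z\varphi)$ and $z\mapsto\mathscr{B}_\Pi(z\varphi)$, use Lemmas~\ref{lem-pgf-ppp} and~\ref{lem-pgf-dpp} to identify the determinantal factor from the zero set of the product and then recover the Poisson factor by division. Your additional commentary on why the Fredholm determinant carries no stray exponential factor (the genus-zero canonical product via $\sum_j|\lambda_j|<\infty$) is exactly the content of Lemma~\ref{lem-pgf-dpp} and its proof via Lemma~\ref{lem-fred}.
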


\begin{lemma}\label{lem-pgf-ppp}
Let $N$ be a Poisson point process on $E$ with intensity measure $\nu$. Then  for any fixed $\varphi \in \mathcal{B}_c(E)$, the function $z\mapsto \mathscr{B}_{N}(z\varphi)$ is a non-vanishing entire function.
\end{lemma}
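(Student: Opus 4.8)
The plan is to apply Lemma~\ref{lem-ppp-bog} directly. Fix $\varphi \in \mathcal{B}_c(E)$ and let $\nu$ be the intensity measure of $N$. For each $z \in \C$ the function $z\varphi$ still lies in $\mathcal{B}_c(E)$, so the lemma gives
\[
\mathscr{B}_N(z\varphi) = \exp\Big( \int_E z\varphi \, d\nu \Big) = \exp\Big( z \int_E \varphi \, d\nu \Big).
\]
Since $\varphi$ is bounded with compact support and $\nu$ is Radon, the integral $c_\varphi := \int_E \varphi \, d\nu$ is a well-defined (finite) complex number, so $z \mapsto \mathscr{B}_N(z\varphi) = e^{z c_\varphi}$ is visibly an entire function of $z$ (it is a composition of the linear map $z \mapsto z c_\varphi$ with the exponential).

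It remains to observe that $e^{z c_\varphi}$ never vanishes: the complex exponential has no zeros on $\C$, so $\mathscr{B}_N(z\varphi) \neq 0$ for every $z \in \C$. This completes the argument.

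There is essentially no obstacle here: the only point requiring a word of care is the finiteness of $c_\varphi$, which follows because $\supp_e(\varphi)$ is relatively compact and hence has finite $\nu$-measure while $\varphi$ is bounded, so $|c_\varphi| \le \|\varphi\|_\infty \, \nu(\supp_e(\varphi)) < \infty$; this is exactly what is needed for \eqref{ppp-bog} to be applicable and for the resulting expression to define an entire function. The substantive content of the paper lies elsewhere (in the determinantal counterpart, Lemma~\ref{lem-pgf-dpp}, and in the reconstruction argument), not in this lemma.
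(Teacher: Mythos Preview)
Your proof is correct and follows exactly the same approach as the paper: apply Lemma~\ref{lem-ppp-bog} to write $\mathscr{B}_{N}(z\varphi) = \exp\big(z\int_E \varphi\, d\nu\big)$ and observe that this is a non-vanishing entire function. The paper's version is just a two-line statement of this; your added remarks on the finiteness of $c_\varphi$ are accurate but not strictly needed.
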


\begin{proof}
By Lemma \ref{lem-ppp-bog}, for any $\varphi \in \mathcal{B}_c (E)$, we have 
\[
\mathscr{B}_{N}(z\varphi) =  \exp\Big( z \int_E \varphi d\nu\Big), \, z \in \C,
\]
which is indeed a non-vanishing entire function.
\end{proof}

\begin{lemma}\label{lem-pgf-dpp}
Let $\Pi$ be a determinantal process on $E$ with locally trace class correlation kernel $K$. Then for any fixed $\varphi \in \mathscr{B}_c(E)$, the function $z \mapsto   \mathscr{B}_{\Pi}(z\varphi)$ is entire and  is uniquely determined by its zeros. More precisely, denote 
\[
\mathcal{Z} [  \mathscr{B}_{\Pi}(z\varphi)]
\] 
the zeros (counting multiplicities) of the  function $z \mapsto \mathscr{B}_\Pi(z \varphi)$, then $0 \notin \mathcal{Z}(\mathscr{B}_\Pi(z \varphi)$ and 
\begin{align}\label{sim-prod}
\sum_{x \in \mathcal{Z} [  \mathscr{B}_{\Pi}(z\varphi) ]} | 1/x| < \infty \an  \mathscr{B}_{\Pi}(z\varphi) = \prod_{x \in \mathcal{Z} [  \mathscr{B}_{\Pi}(z\varphi) ]} (1 -z/x).
\end{align}
\end{lemma}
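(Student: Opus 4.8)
The plan is to realize $z \mapsto \mathscr{B}_\Pi(z\varphi)$ as a Fredholm determinant $z \mapsto \det(1 + z\, \varphi K \mathds{1}_{\supp_e(\varphi)})$ and to invoke the classical theory of Fredholm determinants of trace class operators: if $T$ is trace class on a Hilbert space, then $z \mapsto \det(1 + zT)$ is an entire function of order at most $1$, its zeros are precisely the points $-1/\lambda$ where $\lambda$ runs over the nonzero eigenvalues of $T$ (counted with algebraic multiplicity), the reciprocals of these zeros are absolutely summable because $\sum_j |\lambda_j(T)| < \infty$ for trace class $T$ (a theorem of Lidskii-type, or more elementarily the Weyl inequality $\sum |\lambda_j(T)| \le \|T\|_1$), and one has the Hadamard-type product expansion. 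The operator $T = \varphi K \mathds{1}_{\supp_e(\varphi)}$ is trace class by the locally trace class hypothesis on $K$ together with the fact that $\varphi$ is bounded with compact (hence relatively compact) essential support, exactly as noted after \eqref{pgf-dpp} in the excerpt.

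First I would record that $\mathscr{B}_\Pi(0) = \det(1) = 1 \ne 0$, which gives $0 \notin \mathcal{Z}[\mathscr{B}_\Pi(z\varphi)]$ and, incidentally, also shows $z \mapsto \mathscr{B}_\Pi(z\varphi)$ is not identically zero so that its zero set is discrete and the product in \eqref{sim-prod} makes sense. Next I would cite the standard fact (e.g. Simon, \emph{Trace Ideals and Their Applications}, or Gohberg--Krein) that for $T$ trace class, $g(z) := \det(1 + zT)$ is entire of order $\le 1$ with $g(0) = 1$. Then the absolute summability $\sum_{x \in \mathcal{Z}} |1/x| < \infty$ follows because the zeros are $-1/\lambda_j$ with $\{\lambda_j\}$ the nonzero eigenvalues of $T$, and $\sum_j |\lambda_j| \le \|T\|_1 < \infty$. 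Finally, since $g$ is entire of order $\le 1$ with $\sum 1/|x| < \infty$ (so the genus is $0$) and $g(0)=1$, the Hadamard factorization theorem forces
\[
g(z) = e^{cz} \prod_{x \in \mathcal{Z}} (1 - z/x)
\]
for some constant $c \in \C$; it then remains to identify $c = 0$. For this I would use the known first-order expansion $\det(1 + zT) = 1 + z\,\Tr(T) + O(z^2)$ as $z \to 0$ together with the product-side expansion $\prod(1 - z/x) = 1 - z \sum 1/x + O(z^2)$, giving $c = \Tr(T) - \sum_x 1/x$; but a cleaner route is to note that, for trace class $T$, one always has the genus-zero identity $\det(1+zT) = \prod_j (1 + z\lambda_j)$ directly (this is part of the Lidskii/Gohberg--Krein theory of Fredholm determinants, not merely a consequence of Hadamard's theorem), which is exactly $\prod_{x \in \mathcal{Z}}(1 - z/x)$ with no exponential prefactor, and that is \eqref{sim-prod}.

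The main obstacle — really the only subtlety — is the exponential factor: Hadamard's theorem by itself only gives the product up to $e^{cz}$, and one must argue $c = 0$. There are two honest ways to close this gap, and I would pick whichever the paper's conventions favor: (i) quote the sharper statement from the Fredholm-determinant literature that for trace class $T$ the equality $\det(1 + zT) = \prod_j(1 + z\lambda_j(T))$ holds on the nose (this is where trace class, as opposed to merely Hilbert--Schmidt, is essential — for Hilbert--Schmidt one genuinely needs a regularized determinant with a correction term); or (ii) compute $c$ from the Taylor coefficients at $0$, using $\frac{d}{dz}\big|_{z=0}\log\det(1+zT) = \Tr(T) = \sum_j \lambda_j$ (Lidskii's theorem) and matching with $\frac{d}{dz}\big|_{z=0}\log\prod_j(1+z\lambda_j) = \sum_j \lambda_j$. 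Both amount to the same input, namely Lidskii's trace formula; I would phrase the proof around route (i) with a reference, relegating the verification $\mathscr{B}_\Pi(z\varphi) = \det(1 + z\,\varphi K \mathds{1}_{\supp_e(\varphi)})$ to \eqref{pgf-dpp} applied with $z\varphi$ in place of $\varphi$ (noting $\supp_e(z\varphi) = \supp_e(\varphi)$ for $z \ne 0$, and the $z = 0$ case being trivial).
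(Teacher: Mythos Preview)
Your proposal is correct and takes essentially the same route as the paper: both identify $\mathscr{B}_\Pi(z\varphi)$ with the Fredholm determinant $\det(1 + z\,\varphi K \mathds{1}_{\supp_e(\varphi)})$ and then invoke the classical product formula $\det(1+zT) = \prod_j(1 + z\lambda_j(T))$ for trace class $T$ (the paper isolates this as a separate lemma, citing Simon). Your extended discussion of Hadamard factorization and the exponential prefactor is sound but superfluous once you commit to route (i), which is exactly what the paper does.
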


The proof of Lemma \ref{lem-pgf-dpp} relies on the following well-known property of the Fredholm determinant.
 
\begin{lemma}[see Simon {\cite[Theorems 3.3 \& 4.2]{Simon-det}}]\label{lem-fred}
Let $T$ be a trace class operator acting on a complex Hilbert space. Then $z \mapsto \det(1 + z T)$ is an entire function whose zeros are exactly 
\[
\Big\{ - \frac{1}{\lambda_i(T)}\Big\}_{i = 1}^{N(T)},
\]
where $\{\lambda_i(T)\}_{i=1}^{N(T)}$ are non-zero eigenvalues (counting multiplicities) of the trace class operator $T$ and we have 
\begin{align}\label{zero-fred}
\det(1 + z T) = \prod_{i = 1}^{N(T)} ( 1 + z \lambda_i(T)).
\end{align}
In particular, the entire function $\det(1 + z T)$ is uniquely determined by its zeros. 
 \end{lemma}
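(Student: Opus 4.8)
The final statement to prove is Lemma \ref{lem-fred} itself: for a trace class operator $T$ on a complex Hilbert space, the map $z \mapsto \det(1+zT)$ is entire, its zeros are exactly $\{-1/\lambda_i(T)\}$ where $\lambda_i(T)$ are the nonzero eigenvalues, the product formula \eqref{zero-fred} holds, and consequently the function is determined by its zeros. Since this is attributed to Simon \cite[Theorems 3.3 \& 4.2]{Simon-det}, I will sketch the standard proof from the analytic theory of Fredholm determinants.

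\medskip

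The plan is to build the Fredholm determinant from the exterior algebra and extract its analytic properties from the convergence of that expansion. First I would recall the definition
\[
\det(1+zT) = \sum_{k=0}^{\infty} z^{k}\,\Tr\big(\textstyle\bigwedge^{k} T\big),
\]
where $\bigwedge^{k} T$ is the induced operator on the $k$-th antisymmetric tensor power. The key analytic estimate is the trace bound $\big|\Tr(\bigwedge^{k} T)\big| \le \|T\|_{1}^{k}/k!$, which one gets from Lidskii's theorem together with Hadamard's inequality applied to the singular values; this is exactly the content that makes the series converge. From this bound the $k$-th coefficient is dominated by $\|T\|_{1}^{k}/k!$, so the power series has infinite radius of convergence and $z \mapsto \det(1+zT)$ is entire of order at most $1$ (indeed the estimate $|\det(1+zT)| \le \exp(|z|\,\|T\|_{1})$ follows, giving finite exponential type). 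I would cite this growth bound as the justification for the Hadamard factorization later.

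\medskip

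Next I would identify the zeros. The multiplicativity $\det((1+zS)(1+zT)) = \det(1+zS)\det(1+zT)$ for trace class perturbations, combined with invariance under conjugation by invertibles, lets me reduce to a triangular/finite-rank analysis. Concretely, using that the nonzero spectrum of a trace class operator is a sequence $\{\lambda_i(T)\}$ with $\sum_i |\lambda_i(T)| < \infty$, I would invoke the fact that $\det(1+zT)$ depends only on the eigenvalues (again Lidskii's theorem, which identifies the trace of $\bigwedge^{k} T$ with the $k$-th elementary symmetric function $e_{k}$ of the eigenvalues). This yields
\[
\det(1+zT) = \sum_{k=0}^{\infty} e_{k}\big(\{\lambda_i(T)\}\big)\, z^{k} = \prod_{i=1}^{N(T)} \big(1 + z\lambda_i(T)\big),
\]
where the product converges absolutely because $\sum_i |\lambda_i(T)| < \infty$. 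This simultaneously gives formula \eqref{zero-fred} and shows the zeros are precisely $\{-1/\lambda_i(T)\}$ with the correct multiplicities. Finally, since the function is entire, vanishes exactly at $\{-1/\lambda_i(T)\}$ with $\sum_i |\lambda_i(T)| < \infty$ (so the zeros have finite sum of reciprocal moduli) and has order at most $1$, the Hadamard factorization theorem shows it equals its canonical product up to a factor $e^{az}$; matching the value and derivative at $0$ (both determined by the product) forces $a=0$, proving the function is uniquely determined by its zeros.

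\medskip

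I expect the main obstacle to be justifying the identity $\Tr(\bigwedge^{k} T) = e_{k}(\{\lambda_i(T)\})$ rigorously in the infinite-dimensional trace class setting, i.e.\ Lidskii's theorem and the passage from the abstract exterior-power definition to the symmetric-function-of-eigenvalues formula; in finite dimensions this is linear algebra, but in infinite dimensions it requires the trace ideal machinery and a limiting argument approximating $T$ by finite-rank operators in trace norm. The convergence estimates via Hadamard's inequality are routine once Lidskii is in hand, and the Hadamard factorization step is standard complex analysis. Accordingly, I would either develop Lidskii's theorem in a short lemma or, as the excerpt does, cite Simon \cite{Simon-det} for these structural facts and focus the written proof on assembling them into the stated conclusions.
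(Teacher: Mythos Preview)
The paper does not prove this lemma at all; it is stated with a citation to Simon \cite[Theorems 3.3 \& 4.2]{Simon-det} and used as a black box. Your sketch is a correct outline of the standard argument (exterior-power definition, the estimate $|\Tr(\bigwedge^k T)|\le \|T\|_1^k/k!$ giving entirety and order $\le 1$, Lidskii's theorem to identify the coefficients with elementary symmetric functions of the eigenvalues, and then the product formula), and indeed this is essentially how Simon proceeds in the cited reference. So you have supplied more than the paper does; if you wish to match the paper exactly, simply cite Simon and omit the proof.
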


\begin{proof}[Proof of Lemma \ref{lem-pgf-dpp}]
By the definition formula \eqref{pgf-dpp} for the determinantal process, for any $\varphi \in \mathcal{B}_c(E)$, we have
\[
\mathscr{B}_{\Pi}(z \varphi)   = \det ( 1 +  z \varphi K \mathds{1}_{\supp_e(\varphi)}).
\]
The locally trace class assumption on $K$ implies that $\varphi K \mathds{1}_{\supp_e(\varphi)}$ is trace class. Therefore, by Lemma \ref{lem-fred}, the function 
\[
z \mapsto \mathscr{B}_\Pi(z \varphi) 
\]
is entire  and is uniquely determined by its zeros. The representation \eqref{sim-prod} follows from the formula \eqref{zero-fred}.
\end{proof}

\begin{proof}[Proof of Proposition  \ref{prop-inj}]
 Define for each fixed $\varphi \in \mathcal{B}_c(E)$ an entire function 
\[
F_\varphi(z): = \mathscr{B}_N(z \varphi) \cdot \mathscr{B}_\Pi(z \varphi), \, z \in \C.
\]
Note that $F_\varphi$ is determined by the product functional $\mathscr{B}_N \cdot \mathscr{B}_\Pi$.  Now by Lemma \ref{lem-pgf-ppp}, $ z \mapsto \mathscr{B}_N(z \varphi)$ is non-vanishing on $\C$. Therefore, the zeros (counting multiplicities) of the entire function $z \mapsto \mathscr{B}_\Pi (z \varphi)$ are exactly $\mathcal{Z}(F_\varphi)$, the zeros (counting multiplicities) of $F_\varphi$. By Lemma \ref{lem-pgf-dpp}, we have 
\[
\mathscr{B}_\Pi(z \varphi)  =  \prod_{x \in \mathcal{Z}(F_\varphi)} (1 - z/x).
\]
In particular, we have 
\[
\mathscr{B}_\Pi(\varphi) = \prod_{x \in \mathcal{Z}(F_\varphi)} (1 - 1/x)
\]
and hence 
\[
\mathscr{B}_N(\varphi) = \frac{F_\varphi(1) }{ \prod_{x \in \mathcal{Z}(F_\varphi)} (1 - 1/x)}.
\]
The proof of Proposition \ref{prop-inj} is complete. 
\end{proof}

\subsection{Further generalizations}
Using the idea in proof of  Theorem \ref{thm-main}, we can immediately generalize our main result in a more general setting. 

Let $\mathcal{C}$ be the class of point processes $\Lambda$ on $E$ such that the probability generating functional 
\[
\varphi \in \mathcal{B}_c(E) \mapsto \mathscr{B}_\Lambda(\varphi) : = \E\Big(\prod_{x\in E} (1 + \varphi(x))^{\Lambda(\{x\})}\Big)
\]
is well-defined. It is easy to see that a point process $\Lambda$ on $E$ belongs to the class $\mathcal{C}$ iff for any relatively comapct Borel subset $B\subset E$ and any $a > 0$, 
\begin{align}\label{C-criterion}
\E(a^{\Lambda(B)})<\infty.
\end{align}

Using routine argument, we can prove that for any $\mathcal{C}$-type point process $\Lambda$ and any $\varphi \in \mathcal{B}_c(E)$,  the function $z\mapsto \mathscr{B}_\Lambda(z \varphi)$ is entire.  Now let us define two sub-classes of $\mathcal{C}$:
\begin{itemize}
\item Let $\mathcal{C}_{non-vanishing}$ be the sub-class of $\mathcal{C}$ consisting of $\mathcal{C}$-type point processes $\Theta$'s such that for any $\varphi \in \mathcal{B}_c(E)$, the entire function 
\[
z \mapsto \mathscr{B}_\Theta (z \varphi)
\]
is non-vanishing on $\C$.
\item  Let $\mathcal{C}_{zero}$ be the sub-class of $\mathcal{C}$ consisting of $\mathcal{C}$-type point processes $\Xi$'s such that for any $\varphi \in \mathcal{B}_c(E)$, the entire function 
\[
z \mapsto \mathscr{B}_\Xi (z \varphi)
\]
is uniquely determined by its zeros. 
 \end{itemize}

\begin{theorem}\label{thm-gen}
Let $\Theta$ and $\Xi$ be two independent point processes on $E$, of type $\mathcal{C}_{non-vanishing}$ and type $\mathcal{C}_{zero}$ respectively. Then the independent superposition $\Theta + \Xi$ determines uniquely the distribution of $\Theta$, as well as that of $\Xi$. 
\end{theorem}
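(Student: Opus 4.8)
The plan is to replay the proof of Proposition \ref{prop-inj} essentially verbatim, with the two structural hypotheses now doing the work that Lemmas \ref{lem-pgf-ppp} and \ref{lem-pgf-dpp} did there. First I would recall the two facts already in hand: for any $\mathcal{C}$-type point process $\Lambda$ the functional $\varphi \in \mathcal{B}_c(E) \mapsto \mathscr{B}_\Lambda(\varphi)$ determines the law of $\Lambda$, and by independence $\mathscr{B}_{\Theta + \Xi}(\varphi) = \mathscr{B}_\Theta(\varphi) \cdot \mathscr{B}_\Xi(\varphi)$ for every $\varphi \in \mathcal{B}_c(E)$. So it suffices to show that the product functional $\varphi \mapsto \mathscr{B}_\Theta(\varphi) \cdot \mathscr{B}_\Xi(\varphi)$ determines each of $\varphi \mapsto \mathscr{B}_\Theta(\varphi)$ and $\varphi \mapsto \mathscr{B}_\Xi(\varphi)$.

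For a fixed $\varphi \in \mathcal{B}_c(E)$ I would then form $F_\varphi(z) := \mathscr{B}_\Theta(z\varphi) \cdot \mathscr{B}_\Xi(z\varphi)$, $z \in \C$, which is entire (both factors are entire because $\Theta, \Xi \in \mathcal{C}$) and is manifestly determined by the product functional. Since $\Theta$ is of type $\mathcal{C}_{non-vanishing}$, the factor $z \mapsto \mathscr{B}_\Theta(z\varphi)$ never vanishes, and multiplication by a nowhere-vanishing entire function leaves the order of every zero unchanged; hence $F_\varphi$ and $z \mapsto \mathscr{B}_\Xi(z\varphi)$ have the same zeros with the same multiplicities, i.e.\ $\mathcal{Z}(F_\varphi) = \mathcal{Z}[\mathscr{B}_\Xi(z\varphi)]$ as multisets, and this multiset is determined by the product functional. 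Since $\Xi$ is of type $\mathcal{C}_{zero}$, the entire function $z \mapsto \mathscr{B}_\Xi(z\varphi)$ is uniquely determined by that multiset, hence by the product functional; in particular so is $\mathscr{B}_\Xi(\varphi)$, its value at $z = 1$.

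To recover $\mathscr{B}_\Theta$ I would argue as follows. The entire function $z \mapsto \mathscr{B}_\Xi(z\varphi)$ is not identically zero, because at $z = 0$ it takes the value $\mathscr{B}_\Xi(\mathbf{0}) = \E(\mathbf{1}) = 1$; hence its zero set is discrete and $U := \{z \in \C : \mathscr{B}_\Xi(z\varphi) \neq 0\}$ is open and dense. On $U$ one has $\mathscr{B}_\Theta(z\varphi) = F_\varphi(z)/\mathscr{B}_\Xi(z\varphi)$, which is determined by the product functional; since $z \mapsto \mathscr{B}_\Theta(z\varphi)$ is entire, its value at every point of $\C$ --- in particular $\mathscr{B}_\Theta(\varphi)$ at $z = 1$ --- is then recovered by continuity from $U$. (Equivalently, the meromorphic function $z \mapsto F_\varphi(z)/\mathscr{B}_\Xi(z\varphi)$ has only removable singularities, since every zero of the denominator is cancelled with matching multiplicity by a zero of $F_\varphi$, so it agrees with $z \mapsto \mathscr{B}_\Theta(z\varphi)$ throughout $\C$.) Letting $\varphi$ range over $\mathcal{B}_c(E)$ then pins down both functionals, hence both laws.

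The only point I expect to need care --- and the only deviation from the literal argument of Proposition \ref{prop-inj} --- is that $z = 1$ may itself lie in $\mathcal{Z}(F_\varphi)$, so that $\mathscr{B}_\Theta(\varphi)$ cannot be obtained by naively dividing $F_\varphi(1)$ by $\mathscr{B}_\Xi(\varphi)$; this is exactly what the density/continuity (equivalently, removable-singularity) step above is for. Beyond that the argument is formal, the analytic content having been absorbed into the definitions of $\mathcal{C}_{non-vanishing}$ and $\mathcal{C}_{zero}$; as a consistency check, Theorem \ref{thm-main} is the special case $\Theta = N$, $\Xi = \Pi$, since Lemma \ref{lem-pgf-ppp} gives $N \in \mathcal{C}_{non-vanishing}$ and Lemma \ref{lem-pgf-dpp} gives $\Pi \in \mathcal{C}_{zero}$.
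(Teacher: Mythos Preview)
Your proposal is correct and follows exactly the route the paper intends: its proof of Theorem~\ref{thm-gen} is simply ``similar to that of Theorem~\ref{thm-main}'', i.e.\ the argument of Proposition~\ref{prop-inj} with the class hypotheses replacing Lemmas~\ref{lem-pgf-ppp} and~\ref{lem-pgf-dpp}. Your removable-singularity step to handle the possibility $1\in\mathcal{Z}(F_\varphi)$ is in fact slightly more careful than the paper's own division formula in the proof of Proposition~\ref{prop-inj}.
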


\begin{proof}
The proof is similar to that of Theorem \ref{thm-main}.
\end{proof}

It is worthwhile to note that the classes $\mathcal{C}_{non-vanishing}$ and $\mathcal{C}_{zero}$  are both closed under the operation of independent superposition. While the independent superpositions of Poisson point processes yield again Poisson point processes, the independent superpositions of determinantal point processes may yield point processes which are not determinantal.

 Let us give more examples of $\mathcal{C}_{non-vanishing}$-type  point processes. We call a point process $\Lambda$ infinitely divisible, if for any $n\in \N$, there exist independent identically distributed point processes $\Lambda_{1, n}, \cdots, \Lambda_{n,n}$, such that 
\begin{align}\label{n-superposition}
\Lambda \stackrel{d}{=} \Lambda_{1, n} + \cdots + \Lambda_{n, n}.
\end{align}

\begin{remark}\label{rem-inf-div}
The criterion \eqref{C-criterion} for the $\mathcal{C}$-type point processes implies that if $\Lambda$ is an infinite divisible $\mathcal{C}$-type point process, then for any $n \in \N$, the point processes $\Lambda_{1, n}, \cdots, \Lambda_{n,n}$ in the formula \eqref{n-superposition} all belong to the class $\mathcal{C}$. Therefore, for any non-negative function $\varphi \in \mathcal{B}_c(E)$,  we have 
\[
\mathscr{B}_{\Lambda_{1,n}}(\varphi) = \mathscr{B}_\Lambda(\varphi)^{1/n}, 
\]
which in turn implies that the distribution of $\Lambda_{1,n}$ is uniquely determined by that of $\Lambda$.
\end{remark}

\begin{proposition}
Any $\mathcal{C}$-type infinitely divisible point process is in the class $\mathcal{C}_{non-vanishing}$.
\end{proposition}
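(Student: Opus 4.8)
The plan is to use infinite divisibility to show that the entire function $z \mapsto \mathscr{B}_\Lambda(z\varphi)$ admits an entire $n$-th root for every $n \in \N$, and then to observe that a non-identically-zero entire function with this property can have no zeros at all.

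First I would fix $\varphi \in \mathcal{B}_c(E)$ and set $f(z) := \mathscr{B}_\Lambda(z\varphi)$; this is entire because $\Lambda \in \mathcal{C}$. Since $f(0) = \mathscr{B}_\Lambda(0) = \E(1) = 1 \neq 0$, the function $f$ is not identically zero, so its zeros are isolated and of finite multiplicity. Next, for each $n$ I would take iid point processes $\Lambda_{1,n}, \dots, \Lambda_{n,n}$ realizing \eqref{n-superposition}. By the reasoning recorded in Remark \ref{rem-inf-div}, each $\Lambda_{i,n}$ lies in $\mathcal{C}$, so the multiplicativity of the generating functional over independent summands (the same computation as in \eqref{pgf-super}, now applied to the test function $z\varphi \in \mathcal{B}_c(E)$) yields
\[
f(z) = \prod_{i=1}^{n} \mathscr{B}_{\Lambda_{i,n}}(z\varphi) = g_n(z)^n, \qquad g_n(z) := \mathscr{B}_{\Lambda_{1,n}}(z\varphi),
\]
where $g_n$ is again entire since $\Lambda_{1,n} \in \mathcal{C}$.

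To conclude, suppose $f(z_0) = 0$ for some $z_0 \in \C$. Then $g_n(z_0)^n = 0$, hence $g_n(z_0) = 0$, and if $m_n \geq 1$ denotes the order of vanishing of $g_n$ at $z_0$ then $f = g_n^n$ has a zero of order $n m_n \geq n$ at $z_0$. As $n \in \N$ is arbitrary, $f$ would vanish to infinite order at $z_0$, contradicting $f \not\equiv 0$. Therefore $z \mapsto \mathscr{B}_\Lambda(z\varphi)$ is non-vanishing on $\C$, and since $\varphi \in \mathcal{B}_c(E)$ was arbitrary, $\Lambda \in \mathcal{C}_{non-vanishing}$.

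I do not anticipate a genuine obstacle here. The only step deserving care is the factorization $f(z) = \prod_{i=1}^n \mathscr{B}_{\Lambda_{i,n}}(z\varphi)$: it amounts to interchanging the expectation with a finite product of independent factors which are integrable thanks to the criterion \eqref{C-criterion}, exactly as in the proof of \eqref{pgf-super}; everything else is a short complex-analysis argument.
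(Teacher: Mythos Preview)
Your proposal is correct and follows essentially the same approach as the paper: both arguments use the factorization $\mathscr{B}_\Lambda(z\varphi) = \mathscr{B}_{\Lambda_{1,n}}(z\varphi)^n$ coming from infinite divisibility (together with Remark~\ref{rem-inf-div}) to force any hypothetical zero to have multiplicity at least $n$ for every $n$, and then conclude via the normalization $\mathscr{B}_\Lambda(0)=1$. The only differences are cosmetic---you state $f(0)=1$ up front and track the multiplicity $nm_n$ explicitly, whereas the paper frames things as a contradiction---so there is nothing to add.
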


\begin{proof}
Let $\Lambda$ be a $\mathcal{C}$-type infinite divisible point process. Suppose by contradiction that $\Lambda$ does  not belong to the class $\mathcal{C}_{non-vanishing}$, then there exists $\varphi_0 \in \mathcal{B}_c(E)$ and $z_0\in \C$ such that 
\begin{align}\label{1-zero}
\mathscr{B}_\Lambda (z_0 \cdot\varphi_0) = 0.
\end{align}
Now since  $\Lambda$ is infinite divisible, by Remark \ref{rem-inf-div}, there exist  $\mathcal{C}$-type independent identically distributed point processes $\Lambda_{1,n}, \cdots, \Lambda_{n,n}$ such that the formula \eqref{n-superposition} holds. Therefore, we have 
\[
\mathscr{B}_\Lambda(z \cdot \varphi_0) = \mathscr{B}_{\Lambda_{1,n}} (z \cdot \varphi_0)^n.
\]
But then the equality \eqref{1-zero} implies that $\mathscr{B}_{\Lambda_{1,n}} (z_0 \cdot\varphi_0) = 0$. Therefore, $z_0$ is a zero of the entire function $z \mapsto \mathscr{B}_\Lambda(z\cdot \varphi_0)$ of multiplicity $\ge n$. Since $n$ is arbitrary, $z_0$ is a zero of the entire function  $z \mapsto \mathscr{B}_\Lambda(z\cdot \varphi_0)$ of infinite multiplicity and hence 
\[
\mathscr{B}_\Lambda (z \cdot \varphi_0) \equiv 0.
\]
This contradicts the trivial identity $\mathscr{B}_\Lambda (0) = 1$. 
\end{proof}

\begin{remark}
Any Cox process is infinite divisible. Moreover,  a Cox process $\Lambda$ on $E$ is in our class $\mathcal{C}$ iff its random intensity measure $\lambda$ satisfies: for any $a >0$, 
\[
\E \Big(a^{\lambda(B)} \Big) < \infty.
\]
\end{remark}

\subsection{Acknowledgement}
The author is deeply grateful to the anonymous referees for their valuable suggestions, especially  their suggestions on infinite divisible point processes, Cox processes and non-self adjoint correlation kernels for determinantal processes.



\end{document}